\documentclass[11pt,reqno]{amsart}

\usepackage[T1]{fontenc}
\usepackage{lmodern}
\usepackage{amsmath,amssymb,amsthm,mathtools}
\usepackage{microtype}
\usepackage[margin=1.12in]{geometry}
\usepackage{aliascnt}
\usepackage[colorlinks=true,linkcolor=blue,citecolor=blue,urlcolor=blue]{hyperref}
\usepackage[nameinlink,capitalize,noabbrev]{cleveref}
\hypersetup{
 pdftitle={Divergence Liftings and Regular Factorizations for Metric Cotype},
 pdfauthor={Yue Wang},
 pdfsubject={Mathematics: Functional Analysis; MSC 2020: 46B85, 46B20, 47B38, 47B65},
 pdfkeywords={metric cotype, nonlinear lifting, regular operator, quotient map, Banach lattice, L1},
 pdfdisplaydoctitle=true
}

\numberwithin{equation}{section}
\allowdisplaybreaks
\newtheorem{theorem}{Theorem}[section]
\newaliascnt{proposition}{theorem}
\newtheorem{proposition}[proposition]{Proposition}
\aliascntresetthe{proposition}
\newaliascnt{lemma}{theorem}
\newtheorem{lemma}[lemma]{Lemma}
\aliascntresetthe{lemma}
\newaliascnt{corollary}{theorem}
\newtheorem{corollary}[corollary]{Corollary}
\aliascntresetthe{corollary}
\theoremstyle{remark}
\newtheorem{remark}[theorem]{Remark}

\newcommand{\N}{\mathbb N}
\newcommand{\E}{\mathbb E}
\newcommand{\ran}{\operatorname{ran}}
\newcommand{\sgn}{\mathrm{sgn}}
\newcommand{\longop}{\mathrm{long}}
\newcommand{\norm}[1]{\left\lVert#1\right\rVert}

\title[Divergence liftings and regular factorizations]{Divergence Liftings and Regular Factorizations for Metric Cotype}
\author{Yue Wang}
\address{School of Mathematical Sciences, Xiamen University, Xiamen 361005, China}
\email{19020260158074@stu.xmu.edu.cn}
\date{}
\subjclass[2020]{Primary 46B85; Secondary 46B20, 47B38, 47B65}
\keywords{metric cotype, nonlinear lifting, regular operator, quotient map, Banach lattice, \(L_1\)}

\begin{document}

\begin{abstract}
We study scalar linear factorizations of coordinate antipodal differences through sign increments on finite discrete tori and the corresponding adjoint divergence liftings. On tori of side length \(2m\) with \(m\) even, the optimal regular norms of the factorization on scalar \(L_p\) and of its adjoint are \(m\) for every \(1\le p<\infty\). Quotient-space duality identifies the optimal nonlinear lifting constant with the corresponding Banach-valued difference constant. A sharp \(L_1\) metric-cotype inequality established jointly with Cheng and Xiang then yields \(\ell_\infty^N\)-valued nonlinear liftings with a uniform bound at the sharp metric-cotype scales, whereas the optimal regular norms of the normalized scalar linear liftings grow with the dimension of the torus.
\end{abstract}

\maketitle

\section{Introduction and main results}\label{sec:introduction}

In the sign-increment formulation, metric cotype inequalities compare coordinate antipodal increments with local sign increments of Banach-valued mappings on finite discrete tori. We study the associated factorization problem for these difference operators and the corresponding lifting problem for their adjoint divergence operators. Our main concern is the distinction between data-dependent nonlinear liftings and factorizations realized by a single regular linear operator.

Mendel and Naor introduced metric cotype, proved its equivalence to Rademacher cotype for Banach spaces, established sharp scaling for \(K\)-convex Banach spaces, and posed the sharp quadratic metric-cotype problem for \(L_1\) \cite[Sections~4 and~8]{MN08}. Giladi, Mendel, and Naor improved the general estimates for the scaling parameter and proved a lower bound inherent to the smoothing-and-approximation scheme \cite{GMN11}. Naor and Schechtman proved the corresponding sharp sign-increment inequality for \(K\)-convex Banach spaces of cotype \(p\) \cite[Section~5.2, Theorem~5.2]{NS16}. Eskenazis, Mendel, and Naor showed that the ternary-edge and sign-edge formulations are equivalent up to universal constant factors and established sharp metric cotype \(q\) for \(q\)-barycentric metric spaces \cite[Section~4 and Theorem~5]{EMN19}.

Throughout, all Banach spaces are real. For a linear operator \(A\), we write \(\ker A\) and \(\ran A\) for its kernel and range. All finite sets are nonempty and, unless otherwise specified, carry normalized counting measure. The notation \(\E_{\omega\in\Omega}\) denotes normalized averaging over a finite set \(\Omega\); the averaging domain is omitted when clear. Write \(\N=\{1,2,\ldots\}\). For \(k\in\N\), write \([k]=\{1,\ldots,k\}\). Given \(n,m\in\N\), set
\[
G_{m,n}=(\mathbb Z/(2m\mathbb Z))^n,
\qquad
\Sigma_n=\{-1,1\}^n.
\]
Let \(e_j\) denote the \(j\)-th standard basis vector of \(\mathbb Z^n\), identified with its image in \(G_{m,n}\), and understand all additions in \(G_{m,n}\) modulo \(2m\). Fix \(1\le p<\infty\), and let \(p'\) denote the conjugate exponent of \(p\), with \(p'=\infty\) when \(p=1\). For a Banach space \(X\) and a finite set \(\Omega\), let \(L_r(\Omega;X)\), \(1\le r\le\infty\), denote the space of mappings \(F:\Omega\to X\), equipped with
\[
\|F\|_{L_r(\Omega;X)}=\left(\E_{\omega\in\Omega}\|F(\omega)\|_X^r\right)^{1/r},
\qquad 1\le r<\infty,
\]
and with the maximum norm when \(r=\infty\). We write \(L_r(\Omega)\) in the scalar case and \(X^*\) for the Banach dual of \(X\).

For a mapping \(F:G_{m,n}\to X\), the coordinate antipodal and sign-difference operators \(D_{\longop}:L_p(G_{m,n};X)\to L_p(G_{m,n}\times[n];X)\) and \(D_{\sgn}:L_p(G_{m,n};X)\to L_p(G_{m,n}\times\Sigma_n;X)\) are defined by
\begin{align}
(D_{\longop}F)(x,j)&=F(x+me_j)-F(x),                 \label{eq:Dlong}\\
(D_{\sgn}F)(x,\varepsilon)&=F(x+\varepsilon)-F(x). \label{eq:Dsgn}
\end{align}
Thus \(D_{\longop}F\) records the coordinate antipodal increments, while \(D_{\sgn}F\) records the sign increments. The corresponding normalized \(L_p\)-norms are
\begin{align*}
\norm{D_{\longop}F}_{L_p(G_{m,n}\times[n];X)}^p
&=\frac1n\sum_{j=1}^n\E_x\norm{F(x+me_j)-F(x)}_X^p,\\
\norm{D_{\sgn}F}_{L_p(G_{m,n}\times\Sigma_n;X)}^p
&=\E_{x,\varepsilon}\norm{F(x+\varepsilon)-F(x)}_X^p.
\end{align*}

With respect to the canonical finite-sum dualities, the adjoints are \(D_{\longop}^*:L_{p'}(G_{m,n}\times[n];X^*)\to L_{p'}(G_{m,n};X^*)\) and \(D_{\sgn}^*:L_{p'}(G_{m,n}\times\Sigma_n;X^*)\to L_{p'}(G_{m,n};X^*)\). For \(g:G_{m,n}\times[n]\to X^*\) and \(h:G_{m,n}\times\Sigma_n\to X^*\), they are given by
\begin{align}
D_{\longop}^*g(x)&=\frac1n\sum_{j=1}^n\bigl(g(x-me_j,j)-g(x,j)\bigr),             \label{eq:divlong}\\
D_{\sgn}^*h(x)&=2^{-n}\sum_{\varepsilon\in\Sigma_n}\bigl(h(x-\varepsilon,\varepsilon)-h(x,\varepsilon)\bigr). \label{eq:divsgn}
\end{align}
These are the corresponding discrete divergence operators. Given a datum \(g\), its solution fibre is
\[
\mathcal H(g)=\{h\in L_{p'}(G_{m,n}\times\Sigma_n;X^*):D_{\sgn}^*h=D_{\longop}^*g\}.
\]
The associated lifting problem is to solve
\[
D_{\sgn}^*h=D_{\longop}^*g
\]
with quantitative control of \(\|h\|\) in terms of \(\|g\|\).

The relevant difference and lifting constants are
\begin{equation}\label{eq:Cpx}
C_{p,X}(n,m)=\sup_{D_{\sgn}F\ne0}
\frac{\norm{D_{\longop}F}_{L_p(G_{m,n}\times[n];X)}}
{\norm{D_{\sgn}F}_{L_p(G_{m,n}\times\Sigma_n;X)}},
\end{equation}
and
\begin{equation}\label{eq:Lpxdual}
\Lambda_{p',X^*}(n,m)=\sup_{g\ne0}
\frac{\inf\bigl\{\norm{h}_{L_{p'}(G_{m,n}\times\Sigma_n;X^*)}:D_{\sgn}^*h=D_{\longop}^*g\bigr\}}
{\norm{g}_{L_{p'}(G_{m,n}\times[n];X^*)}}.
\end{equation}
The constant in \eqref{eq:Cpx} is the optimal \(C\) in the difference inequality \(\|D_{\longop}F\|\le C\|D_{\sgn}F\|\). The quantity in \eqref{eq:Lpxdual} is the supremum, over data \(g\) of norm one, of the infimal norm of a lifting.
We set \(C_{p,X}(n,m)=+\infty\) when \(\ker D_{\sgn}\not\subseteq\ker D_{\longop}\), and the infimum over an empty lifting fibre is \(+\infty\). Empty suprema of nonnegative quantities are zero.

For \(p\le q<\infty\), define the normalized long-difference operator by
\begin{equation}\label{eq:normalized-long}
\mathcal D_{q,\longop}=\frac{n^{1/q}}mD_{\longop}.
\end{equation}
The factor \(n^{1/q}/m\) is the standard metric-cotype normalization. A sign-edge metric-cotype estimate with constant \(\Gamma\) takes the form
\begin{equation}\label{eq:sign-cotype}
\norm{\mathcal D_{q,\longop}F}_{L_p(G_{m,n}\times[n];X)}
\le\Gamma\norm{D_{\sgn}F}_{L_p(G_{m,n}\times\Sigma_n;X)}.
\end{equation}
Fix a Banach space \(X\) and \(1\le p\le q<\infty\). We say that \(X\) satisfies the sign-edge metric-cotype inequality with exponent \(p\), cotype parameter \(q\), and sharp scaling if there exist \(K,\Gamma<\infty\) such that, for every \(n\in\N\), one can choose an even integer \(m_n\le K n^{1/q}\) for which \eqref{eq:sign-cotype} holds with constant \(\Gamma\). When \(p<q\), the underlying inequality is the sign-edge analogue of weak metric cotype \(q\) with exponent \(p\) in the sense of \cite[Definition~1.3]{MN08}; here we additionally impose the sharp scaling \(m_n=O(n^{1/q})\). The comparison with the ternary-edge formulation appears in \cite[Section~4]{EMN19}.

A standard quotient-duality argument gives the following identification of the difference and lifting constants.

\begin{theorem}\label{thm:duality}
For every Banach space \(X\), every \(1\le p<\infty\), and every \(n,m\in\N\),
\begin{equation}\label{eq:exact-duality}
\Lambda_{p',X^*}(n,m)=C_{p,X}(n,m)
\end{equation}
in \([0,\infty]\). Hence the optimal nonlinear lifting constant agrees exactly with the corresponding Banach-valued difference constant.
\end{theorem}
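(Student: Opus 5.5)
The proof is finite-dimensional quotient duality applied to the operators \(A=D_{\sgn}\) and \(B=D_{\longop}\), viewed as maps from \(E=L_p(G_{m,n};X)\) into \(Y=L_p(G_{m,n}\times\Sigma_n;X)\) and \(Z=L_p(G_{m,n}\times[n];X)\). All index sets are finite. So \(Y^*\) and \(Z^*\) are isometrically \(L_{p'}(\cdot;X^*)\) under the normalized finite-sum pairings used to define \eqref{eq:divlong} and \eqref{eq:divsgn}. This holds for every Banach space \(X\), since on a finite set the dual of an \(L_p\)-direct sum of copies of \(X\) is the \(L_{p'}\)-sum of copies of \(X^*\); no Radon--Nikodým property is needed, and \(p=1\) gives \(L_\infty\). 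In this language, \(C_{p,X}(n,m)\) is the smallest \(C\) with \(\|BF\|\le C\|AF\|\) for all \(F\in E\). Likewise, \(\Lambda_{p',X^*}(n,m)\) is the smallest \(\Lambda\) such that every \(B^*g\) has an \(A^*\)-preimage of norm at most \(\Lambda\|g\|\), with the conventions for \(+\infty\).

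\textbf{Step 1: \(C\le\Lambda\).} Assume \(\Lambda<\infty\) and fix \(F\). Choose \(g\in Z^*\) of norm one with \(\langle g,BF\rangle\) arbitrarily close to \(\|BF\|\) (Hahn--Banach). If \(h\) is a lifting, i.e.\ \(A^*h=B^*g\), then
\[
\langle g,BF\rangle=\langle B^*g,F\rangle=\langle A^*h,F\rangle=\langle h,AF\rangle\le\|h\|\,\|AF\|.
\]
Taking the infimum over \(h\) gives \(\|BF\|\le\Lambda\|AF\|\).

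The same computation handles the infinite cases. If \(\ker A\not\subseteq\ker B\), then \(C=\infty\) by convention. In that case pick \(F\in\ker A\) with \(BF\ne0\) and a norming \(g\). If a lifting \(h\) existed, we would get \(0<\langle g,BF\rangle=\langle h,AF\rangle=0\), a contradiction. So the fibre is empty and \(\Lambda=\infty\) as well.

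\textbf{Step 2: \(\Lambda\le C\).} Assume \(C<\infty\), so \(\ker A\subseteq\ker B\). Define \(T\) on \(\ran A\subseteq Y\) by \(T(AF)=BF\). It is well defined because \(\ker A\subseteq\ker B\), and it is bounded with \(\|T\|=C\). Fix \(g\in Z^*\). The functional \(y\mapsto\langle g,Ty\rangle\) on \(\ran A\) has norm at most \(C\|g\|\). By Hahn--Banach, extend it to \(h\in Y^*\) with \(\|h\|\le C\|g\|\). Then for every \(F\),
\[
\langle A^*h,F\rangle=\langle h,AF\rangle=\langle g,BF\rangle=\langle B^*g,F\rangle.
\]
Hence \(A^*h=B^*g\), and \(\Lambda\le C\).

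\textbf{Remaining checks.} The case \(D_{\sgn}=0\) does not arise, since \(D_{\sgn}\) is nonzero whenever \(X\ne0\). When \(X=0\), both sides are empty suprema and equal zero. The only point needing care, which I expect to be the main obstacle, is confirming that the adjoint formulas \eqref{eq:divlong} and \eqref{eq:divsgn} are exactly the Banach adjoints under the \emph{normalized} pairings. For example, \(\langle g,u\rangle=\E_{x}\frac1n\sum_j g(x,j)(u(x,j))\) on \(Z\), and similarly with \(2^{-n}\sum_\varepsilon\) on \(Y\). These pairings must also be isometric dualities, so that the dual norms equal the \(L_{p'}\) norms in the statement. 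Both facts follow from a direct change of variables \(x\mapsto x-me_j\) and \(x\mapsto x-\varepsilon\), and from Hölder's inequality with its equality cases on finite measure spaces.
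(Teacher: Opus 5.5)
Your proposal is correct and is essentially the paper's proof: the paper isolates your two Hahn--Banach arguments in an abstract quotient-duality lemma and applies it to \(A=D_{\sgn}\), \(B=D_{\longop}\) via the isometric identification \(L_p(\Omega;X)^*=L_{p'}(\Omega;X^*)\). Those two arguments are the norm-preserving extension of \(y\mapsto\langle g,Ty\rangle\) from \(\ran D_{\sgn}\), and the norming-functional bound for the converse. Two small remarks: your separate treatment of \(\ker D_{\sgn}\not\subseteq\ker D_{\longop}\) is already covered by Step 1, which shows \(\Lambda<\infty\Rightarrow C\le\Lambda\); and the H\"older/duality check needs only approximately norming vectors in \(X\), since norms in \(X^*\) need not be attained.
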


The normalized difference inequality has the following equivalent lifting formulation.

\begin{corollary}\label{cor:cotype}
Let \(X\) be a Banach space, \(n,m\in\N\), \(1\le p\le q<\infty\), and \(0\le\Gamma<\infty\). The estimate
\[
\norm{\mathcal D_{q,\longop}F}_{L_p(G_{m,n}\times[n];X)} \le\Gamma\norm{D_{\sgn}F}_{L_p(G_{m,n}\times\Sigma_n;X)}
\]
holds for every \(F:G_{m,n}\to X\) if and only if every \(g\in L_{p'}(G_{m,n}\times[n];X^*)\) admits an \(h\in L_{p'}(G_{m,n}\times\Sigma_n;X^*)\) such that
\begin{equation}\label{eq:normalized-lifting}
\begin{gathered}
D_{\sgn}^*h=\mathcal D_{q,\longop}^*g,\\
\norm{h}_{L_{p'}(G_{m,n}\times\Sigma_n;X^*)}
\le\Gamma\norm{g}_{L_{p'}(G_{m,n}\times[n];X^*)}.
\end{gathered}
\end{equation}

The optimal constants in the two assertions are equal.
\end{corollary}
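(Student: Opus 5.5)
The plan is to reduce the corollary to \cref{thm:duality} by a scaling argument. Set \(c=n^{1/q}/m>0\), so that \(\mathcal D_{q,\longop}=cD_{\longop}\) and \(\mathcal D_{q,\longop}^*=cD_{\longop}^*\).

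First I would show that the optimal constant in the first assertion is \(cC_{p,X}(n,m)\), with the convention \(c\cdot\infty=\infty\). The estimate for all \(F\) is equivalent to
\[
\|D_{\longop}F\|\le (\Gamma/c)\|D_{\sgn}F\| \quad\text{for all } F.
\]
This in turn holds if and only if \(\ker D_{\sgn}\subseteq\ker D_{\longop}\) and \(C_{p,X}(n,m)\le\Gamma/c\). The case \(D_{\sgn}F=0\) forces \(D_{\longop}F=0\), which is exactly the convention making \(C_{p,X}=\infty\) otherwise. So the estimate holds if and only if \(\Gamma\ge cC_{p,X}(n,m)\), and the infimum of admissible \(\Gamma\) is attained when \(C_{p,X}\) is finite.

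Second, I would apply the same scaling on the lifting side. Given \(g\), the fibre for the datum \(\mathcal D_{q,\longop}^*g=D_{\longop}^*(cg)\) is \(\mathcal H(cg)\). Hence the second assertion holds with constant \(\Gamma\) exactly when every \(g'=cg\) admits \(h\in\mathcal H(g')\) with \(\|h\|\le(\Gamma/c)\|g'\|\). This gives \(\Lambda_{p',X^*}(n,m)\le\Gamma/c\). Conversely, \(\Lambda_{p',X^*}\le\Gamma/c\) yields such \(h\), provided the infimum in \eqref{eq:Lpxdual} is attained.

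The attainment is the only delicate point, and I would handle it by compactness. The spaces are finite-dimensional over the finite set only if \(X\) is, so in general I must argue differently. The fibre \(\mathcal H(g)\) is an affine translate of \(\ker D_{\sgn}^*\). The infimum equals the quotient norm of a fixed element in \(L_{p'}(G_{m,n}\times\Sigma_n;X^*)/\ker D_{\sgn}^*\).

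I expect the attainment step to be the main obstacle, and I would try two routes.

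The first route uses weak\(^*\) closedness. The space \(L_{p'}(\cdot;X^*)\) is the dual of \(L_p(\cdot;X)\) because the index sets are finite. The kernel \(\ker D_{\sgn}^*\) is the annihilator \((\ran D_{\sgn})^\perp\) of the range of the predual operator, since \(D_{\sgn}^*\) is weak\(^*\) continuous. It is therefore weak\(^*\) closed. Distance to a weak\(^*\) closed subspace of a dual space is attained, because balls are weak\(^*\) compact and the norm is weak\(^*\) lower semicontinuous. So minimizers exist.

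The second route applies if the theorem's proof has already established attainment. In that case I would simply cite it.

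With attainment in hand, both assertions hold exactly when \(\Gamma\ge cC_{p,X}(n,m)=c\Lambda_{p',X^*}(n,m)\), using \cref{thm:duality}. Therefore the optimal constants coincide, including the case where both are \(\infty\), where neither assertion holds for any finite \(\Gamma\).
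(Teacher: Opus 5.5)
Your argument is correct. It differs from the paper's in how it is organized and in how attainment is obtained.

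The paper applies \cref{lem:abstract-duality} directly with \(A=D_{\sgn}\) and \(B=\mathcal D_{q,\longop}\). If the difference inequality holds with constant \(\Gamma\), the norm-preserving Hahn--Banach extension of \(D_{\sgn}F\mapsto\langle\mathcal D_{q,\longop}F,g\rangle\) is itself a lifting with \(\|h\|\le\Gamma\|g\|\). Attainment is therefore built into the construction, and the converse is the one-line adjoint estimate.

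You instead rescale by \(c=n^{1/q}/m\) to reduce to \cref{thm:duality}. You characterize the two assertions as \(\Gamma\ge cC_{p,X}(n,m)\) and \(\Gamma\ge c\Lambda_{p',X^*}(n,m)\). This forces you to establish attainment of the infimum in \eqref{eq:Lpxdual} as a separate step.

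Your weak\(^*\) argument for attainment is sound:
\begin{itemize}
\item For finite index sets the identification \(L_p(\Omega;X)^*=L_{p'}(\Omega;X^*)\) is isometric, so the norm is weak\(^*\) lower semicontinuous.
\item \(\ker D_{\sgn}^*=(\ran D_{\sgn})^\perp\) is weak\(^*\) closed.
\item Banach--Alaoglu then yields a minimizer in each nonempty fibre.
\end{itemize}
This detour is not actually needed. Your second route already works, because \cref{lem:abstract-duality} explicitly records that the inner infimum is attained whenever \(\mathfrak C(A,B)<\infty\).

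The paper's route is shorter and constructive for each datum. Yours additionally identifies both optimal constants explicitly as \(\frac{n^{1/q}}{m}C_{p,X}(n,m)\).
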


We next compare nonlinear liftings chosen for each datum with fixed regular linear factorizations. For an operator \(R\) between finite atomic scalar \(L_r\)-spaces, \(1\le r\le\infty\), its regular norm is
\begin{equation}\label{eq:regular-norm}
\norm{R}_{\mathrm{reg}}=\sup_{N\ge1}\norm{R\otimes I_{\ell_\infty^N}}.
\end{equation}
Here \(I_Y\) denotes the identity operator on a Banach space \(Y\), and \(R\otimes I_Y\) acts on \(Y\)-valued functions by the same scalar matrix as \(R\). Unsubscripted operator norms refer to the indicated domain and range. The regular norm measures uniform boundedness of the same scalar operator under all finite-dimensional \(\ell_\infty\)-valued amplifications. Consider scalar operators
\[
T:L_p(G_{m,n}\times\Sigma_n)\longrightarrow L_p(G_{m,n}\times[n])
\]
and
\[
S:L_{p'}(G_{m,n}\times[n])\longrightarrow L_{p'}(G_{m,n}\times\Sigma_n).
\]
Set
\begin{align}
\mathfrak R_p(n,m)
&=\inf\bigl\{\norm{T}_{\mathrm{reg}}:TD_{\sgn}=D_{\longop}\bigr\},              \label{eq:Rdef}\\
\mathfrak R_{p'}^*(n,m)
&=\inf\bigl\{\norm{S}_{\mathrm{reg}}:D_{\sgn}^*S=D_{\longop}^*\bigr\}.          \label{eq:Rstar}
\end{align}
Thus \(\mathfrak R_p(n,m)\) and \(\mathfrak R_{p'}^*(n,m)\) measure the least regular norm of a single linear operator that realizes the forward or adjoint factorization, respectively.
If the relevant factorization does not exist, the corresponding infimum is \(+\infty\). For background on regular operators, see \cite{Pis94}.

Our main result computes these regular factorization constants exactly.

\begin{theorem}\label{thm:regular}
Let \(1\le p<\infty\). For every \(n\in\N\) and every even \(m\),
\begin{equation}\label{eq:Rexact}
\mathfrak R_p(n,m)=\mathfrak R_{p'}^*(n,m)=m.
\end{equation}
If \(n=1\), the same identity holds for every \(m\in\N\). If \(n\ge2\) and \(m\) is odd, then
\[
\mathfrak R_p(n,m)=\mathfrak R_{p'}^*(n,m)=+\infty.
\]
\end{theorem}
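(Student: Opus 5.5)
The plan is to reduce regular norms to norms of entrywise moduli, build an explicit positive path-averaging factorization for the upper bound $m$, and get the matching lower bound by testing on an isometric $\ell_\infty^N$-embedding of the torus. I will use the standard fact for operators between finite atomic scalar $L_r$-spaces: $\norm{R}_{\mathrm{reg}}=\norm{|R|}$, where $|R|$ is the entrywise modulus of the matrix of $R$ \cite{Pis94}. Consequences are that a sum of $m$ positive contractions has regular norm at most $m$, and that $|R^*|=|R|^*$, so $\norm{R^*}_{\mathrm{reg}}=\norm{R}_{\mathrm{reg}}$. Since $D_{\sgn}^*S=D_{\longop}^*$ is equivalent to $S^*D_{\sgn}=D_{\longop}$, the maps $T\mapsto T^*$ and $S\mapsto S^*$ are norm-preserving bijections between the two factorization sets. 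Hence $\mathfrak R_{p'}^*(n,m)=\mathfrak R_p(n,m)$ in all cases, and it suffices to treat $\mathfrak R_p$.

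\emph{Upper bound ($m$ even).} For $\varepsilon\in\Sigma_n$ and $j\in[n]$, set $\varepsilon'=2\varepsilon_je_j-\varepsilon\in\Sigma_n$. Then $\varepsilon+\varepsilon'=2\varepsilon_je_j$, so $m/2$ alternating steps $\varepsilon,\varepsilon',\varepsilon,\dots$ reach $m\varepsilon_je_j$. Since $-me_j=me_j$ in $G_{m,n}$, this point is $me_j$ for every $\varepsilon$. Telescoping along this path and averaging over all $\varepsilon$ gives
\[
(D_{\longop}F)(x,j)=\E_\varepsilon\sum_{k=0}^{m/2-1}\Bigl[(D_{\sgn}F)(x+2k\varepsilon_je_j,\varepsilon)+(D_{\sgn}F)(x+2k\varepsilon_je_j+\varepsilon,\varepsilon')\Bigr].
\]
Define $T$ by the same formula applied to $h$ in place of $D_{\sgn}F$, so that $TD_{\sgn}=D_{\longop}$. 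Then $T$ is a sum of $m$ positive operators, each of the form
\[
h\mapsto\E_\varepsilon h(x+a_{j,k}(\varepsilon),\sigma_j(\varepsilon)),
\]
with $\sigma_j$ a bijection of $\Sigma_n$ (either the identity or $\varepsilon\mapsto\varepsilon'$). By Jensen's inequality, translation invariance of the measure on $G_{m,n}$, and the fact that $\sigma_j$ preserves normalized counting measure on $\Sigma_n$, each such operator is a contraction from $L_p(G_{m,n}\times\Sigma_n;Y)$ to $L_p(G_{m,n}\times\{j\};Y)$ for every Banach space $Y$. Averaging over $j$ keeps it a contraction, so $\norm{T}_{\mathrm{reg}}\le m$.

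For $n=1$ and any $m$, I use the two $m$-step paths to $x+m\equiv x-m$, one with steps $+1$ and one with steps $-1$, and average them. This gives $T=\sum_{k=0}^{m-1}T_k$ with $T_kh(x)=\E_{\varepsilon} h(x+k\varepsilon,\varepsilon)$, again a sum of $m$ positive contractions. The averaging over both signs is essential, since using only $\varepsilon=+1$ would lose a factor $2^{1/p}$.

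\emph{Lower bound.} This is the step I expect to carry the real content, though it should be clean. Give $G_{m,n}$ the product cycle metric $d(x,y)=\max_i|x_i-y_i|_{\mathbb Z/2m}$; for $n=1$ this is the cycle metric. Embed this finite metric space isometrically into some $\ell_\infty^N$ (Fr\'echet embedding), obtaining $F$. Pointwise $\norm{(D_{\sgn}F)(x,\varepsilon)}=1$ and $\norm{(D_{\longop}F)(x,j)}=m$, since every cyclic distance is at most $m$. Therefore any $T$ with $TD_{\sgn}=D_{\longop}$ satisfies
\[
m=\norm{D_{\longop}F}_p\le\norm{T\otimes I_{\ell_\infty^N}}\,\norm{D_{\sgn}F}_p\le\norm{T}_{\mathrm{reg}}.
\]
This holds for every $p$ and every $m$, which proves \eqref{eq:Rexact} and the $n=1$ claim.

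\emph{Odd $m$, $n\ge2$.} The subgroup $H$ generated by $\Sigma_n$ lies in the set of points all of whose coordinates have the same parity; this set is well defined modulo $2m$ because $2m$ is even. Take $F=1_H$, the indicator of $H$. It is $H$-invariant, so $D_{\sgn}F=0$. Now $me_j$ has coordinate $j$ odd and all other coordinates equal to $0$, which is even; because $n\ge2$, its coordinates have mixed parity, so $me_j\notin H$. Hence $F(0+me_j)-F(0)=-1$ and $D_{\longop}F\neq0$. No operator $T$ can satisfy $TD_{\sgn}=D_{\longop}$, so $\mathfrak R_p(n,m)=+\infty$, and by the adjoint reduction above $\mathfrak R_{p'}^*(n,m)=+\infty$ as well.
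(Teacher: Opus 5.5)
Your proof is correct and has the same architecture as the paper's. It has a positive path-averaging factorization written as a sum of $m$ contractions, and a lower bound from testing $(T\otimes I)D_{\sgn}F=D_{\longop}F$ on an isometric $\ell_\infty^N$-embedding. It also has a parity obstruction for odd $m$: your indicator of the subgroup generated by $\Sigma_n$ plays the role of the paper's $(-1)^{x_1+x_2}$. Finally, it transfers to the adjoint problem through $|R^*|=|R|^*$.

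Two choices differ, both toward economy.

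\emph{Upper bound.} The paper averages over random balanced paths. For each $k\ne j$, the set of times $r$ with $\varepsilon_k^r=1$ is a uniformly random $m/2$-subset of $[m]$. Your alternating paths $\varepsilon,\varepsilon',\varepsilon,\dots$ are exactly the special case where this set is the odd or the even times, according to $\varepsilon_k$. That already makes every step uniform on $\Sigma_n$, so the Jensen and translation-invariance estimate goes through unchanged with only $2^n$ paths.

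\emph{Lower bound.} The paper uses the sign-graph word metric $d_\Sigma$. This forces it to handle the disconnectedness of the sign graph, with one Fr\'echet embedding per component placed in disjoint blocks. It must also prove $d_\Sigma(x,x+me_j)=m$ by a coordinate-counting argument. Your $\ell_\infty$-product of cyclic metrics is a genuine metric on all of $G_{m,n}$. It gives every sign step length exactly $1$ and every antipodal step length exactly $m$ by inspection, so both of those steps disappear. The paper's choice identifies $m$ as the actual sign-graph distance between $x$ and $x+me_j$, but only these two values enter the argument.

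One wording point: $d(x,x+me_j)=m$ holds because $|m|_{\mathbb Z/2m}=m$ and the other coordinates are unchanged. It is not a consequence of every cyclic distance being at most $m$.
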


\cref{thm:regular} gives the following normalized regular norm.

\begin{corollary}\label{cor:gap}
Let \(n,m\in\N\), \(1\le p\le q<\infty\), and let \(m\) be even. Every regular linear operator \(S:L_{p'}(G_{m,n}\times[n])\to L_{p'}(G_{m,n}\times\Sigma_n)\) satisfying
\[
D_{\sgn}^*S=\mathcal D_{q,\longop}^*
\]
has
\begin{equation}\label{eq:linear-cost}
\norm{S}_{\mathrm{reg}}\ge n^{1/q},
\end{equation}
and equality is attained.
\end{corollary}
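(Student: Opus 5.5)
The plan is to deduce \cref{cor:gap} directly from \cref{thm:regular} by rescaling. The normalization \eqref{eq:normalized-long} gives \(\mathcal D_{q,\longop}=\frac{n^{1/q}}{m}D_{\longop}\). Since \(n^{1/q}/m\) is a real scalar, the adjoint scales the same way: \(\mathcal D_{q,\longop}^*=\frac{n^{1/q}}{m}D_{\longop}^*\). Let \(S\) be any scalar operator with \(D_{\sgn}^*S=\mathcal D_{q,\longop}^*\), and set \(S_0=\frac{m}{n^{1/q}}S\). Then \(D_{\sgn}^*S_0=D_{\longop}^*\), so \(S_0\) is admissible in \eqref{eq:Rstar}.

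For the lower bound we use that the regular norm \eqref{eq:regular-norm} is positively homogeneous. This holds because \((cR)\otimes I_{\ell_\infty^N}=c\,(R\otimes I_{\ell_\infty^N})\) for every \(N\), so taking the supremum over \(N\) preserves the scalar factor. Hence
\[
\norm{S}_{\mathrm{reg}}=\frac{n^{1/q}}{m}\norm{S_0}_{\mathrm{reg}}\ge\frac{n^{1/q}}{m}\,\mathfrak R_{p'}^*(n,m)=\frac{n^{1/q}}{m}\cdot m=n^{1/q},
\]
where the last equality is \cref{thm:regular}, which applies because \(m\) is even. Here \(p'\) is the conjugate of \(p\); the hypothesis \(q\ge p\) plays no role beyond matching the normalization used in the paper.

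For attainment we need the infimum in \eqref{eq:Rstar} to be a minimum. This is the only step that is not pure bookkeeping. I would get it by compactness. The spaces are finite dimensional, and the admissible set \(\{S_0: D_{\sgn}^*S_0=D_{\longop}^*\}\) is a nonempty affine subspace; it is nonempty because the infimum \(m\) is finite. The regular norm is finite on finite matrices, since it is dominated by the norm of the entrywise absolute value matrix. As a supremum of continuous seminorms it is lower semicontinuous, and it dominates the ordinary operator norm (the case \(N=1\)), so it is coercive. A coercive lower semicontinuous function on a closed nonempty set in finite dimensions attains its infimum. This gives an \(S_0\) with \(\norm{S_0}_{\mathrm{reg}}=m\).

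Alternatively, the proof of \cref{thm:regular} presumably exhibits an explicit optimal \(S_0\), for instance one obtained by telescoping the antipodal step \(me_j\) into \(m\) sign steps, which gives norm exactly \(m\); that operator could be cited directly. In either case \(S=\frac{n^{1/q}}{m}S_0\) satisfies \(D_{\sgn}^*S=\mathcal D_{q,\longop}^*\) and \(\norm{S}_{\mathrm{reg}}=n^{1/q}\), so equality is attained.
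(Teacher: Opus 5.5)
Your proposal is correct and matches the paper's proof: rescale by \(c=n^{1/q}/m\), use positive homogeneity of the regular norm, and invoke \cref{thm:regular} for \(\mathfrak R_{p'}^*(n,m)=m\). For attainment, the paper simply scales an optimal unnormalized factorization. That operator is the adjoint \(T^*\) of the path operator from the proof of \cref{thm:regular}, whose regular norm equals \(m\) by \cref{lem:regular-matrix}, so it is your second option; your compactness argument is an equally valid alternative.
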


The sharp \(L_1\) metric-cotype inequality established in joint work with Cheng and Xiang \cite{CWX26} gives the following nonlinear lifting bound.

\begin{corollary}\label{cor:L1}
Let \(n\in\N\), \(2\le q<\infty\), \(1\le p\le q\), and let \(m\ge4\) be even with \(n\le m^q\). For every \(N\in\N\) and every
\[
g\in L_{p'}(G_{m,n}\times[n];\ell_\infty^N)
\]
there exists
\[
h\in L_{p'}(G_{m,n}\times\Sigma_n;\ell_\infty^N)
\]
such that
\begin{equation}\label{eq:L1lift}
\begin{gathered}
D_{\sgn}^*h=\frac{n^{1/q}}mD_{\longop}^*g,\\
\norm{h}_{L_{p'}(G_{m,n}\times\Sigma_n;\ell_\infty^N)}
\le22\norm{g}_{L_{p'}(G_{m,n}\times[n];\ell_\infty^N)}.
\end{gathered}
\end{equation}

The least regular norm of a scalar linear operator \(S\) satisfying \(D_{\sgn}^*S=(n^{1/q}/m)D_{\longop}^*\) is \(n^{1/q}\). In particular, for \(p=q=2\) this regular norm is \(\sqrt n\), whereas the nonlinear lifting bound is at most \(22\).
\end{corollary}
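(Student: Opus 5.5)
The plan combines three earlier ingredients: the sharp \(L_1\) metric-cotype inequality of \cite{CWX26}, transported to \(\ell_1^N\); the duality in \cref{cor:cotype} with \(X=\ell_1^N\), \(X^*=\ell_\infty^N\); and \cref{cor:gap} for the linear statement.

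\textbf{Step 1: the \(L_1\) inequality.} I would take from \cite{CWX26} the estimate in the following form. For \(2\le q<\infty\), \(1\le p\le q\), even \(m\ge4\) with \(n\le m^q\), and every \(F:G_{m,n}\to L_1\),
\[
\frac{n^{1/q}}{m}\Bigl(\frac1n\sum_{j=1}^n\E_x\|F(x+me_j)-F(x)\|_1^p\Bigr)^{1/p}\le 22\bigl(\E_{x,\varepsilon}\|F(x+\varepsilon)-F(x)\|_1^p\bigr)^{1/p}.
\]
Two points need checking here.
\begin{itemize}
\item The cited result may be stated only for \(p=q\), or with a ternary-edge right-hand side. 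If so, I would pass to the sign-edge form using the equivalence in \cite[Section~4]{EMN19}, and extend to \(p<q\) by whatever monotonicity in \(p\) is available. I would track the numerical constant so that it stays \(22\); this bookkeeping is the step I expect to be the main obstacle.
\item Since \(\ell_1^N\) embeds isometrically into \(L_1\), the inequality holds for \(X=\ell_1^N\) with the same constant.
\end{itemize}

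\textbf{Step 2: dualize.} In the notation of \eqref{eq:normalized-long}, Step 1 says exactly that \(\|\mathcal D_{q,\longop}F\|_{L_p}\le 22\|D_{\sgn}F\|_{L_p}\) for all \(F:G_{m,n}\to\ell_1^N\). Apply \cref{cor:cotype} with \(X=\ell_1^N\) and \(\Gamma=22\), using \((\ell_1^N)^*=\ell_\infty^N\). This gives, for each \(g\), an \(h\) with
\[
D_{\sgn}^*h=\mathcal D_{q,\longop}^*g=\frac{n^{1/q}}mD_{\longop}^*g
\]
and \(\|h\|\le 22\|g\|\). Here I use that \(\mathcal D_{q,\longop}^*=(n^{1/q}/m)D_{\longop}^*\), because the normalizing factor is a real scalar. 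This proves \eqref{eq:L1lift}.

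\textbf{Step 3: the linear statement.} Since \(m\) is even, \cref{cor:gap} shows that every \(S\) with \(D_{\sgn}^*S=(n^{1/q}/m)D_{\longop}^*\) has \(\|S\|_{\mathrm{reg}}\ge n^{1/q}\), with equality attained. So the least regular norm is \(n^{1/q}\). This also follows directly from \cref{thm:regular}: scaling an optimal \(S\) for \(\mathfrak R^*_{p'}(n,m)=m\) by \(n^{1/q}/m\) gives regular norm \(n^{1/q}\). Specializing to \(p=q=2\) gives \(\sqrt n\), and the hypotheses of Step 1 allow \(n\le m^2\), which makes the contrast with the bound \(22\) meaningful.
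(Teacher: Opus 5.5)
Your Steps 2 and 3 match the paper: you dualize through \cref{cor:cotype} with $X=\ell_1^N$, $X^*=\ell_\infty^N$, $\Gamma=22$, and you read off the least regular norm $n^{1/q}$ from \cref{cor:gap}. The gap is in Step 1, which is where the actual work of the proof lies.

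The cited input \eqref{eq:CWX} is not a sign-edge inequality on $G_{m,n}$ with constant $22$. It is a ternary-edge inequality on $(\mathbb Z/M\mathbb Z)^n$, with antipodal shifts $\frac M2e_j$ and constant $11Mn^{1/p-1/q}$. It already holds for every $1\le p\le q$, so your concern about $p<q$ is moot. You do not say how to turn it into your displayed estimate, and the route you suggest does not work as stated. The equivalence in \cite[Section~4]{EMN19} holds only up to unspecified universal constants, so it cannot certify the constant $22$. ``Tracking the bookkeeping'' is therefore the missing argument itself, not a plan for it. The naive transfer also fails outright for $n\ge2$: applying \eqref{eq:CWX} on $G_{m,n}$ itself with $M=2m$ leaves the ternary increments $F(x+\delta)-F(x)$. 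These are not controlled by sign increments at all; for example, $f(x)=(-1)^{x_1+x_2}$ has $D_{\sgn}f=0$ but nonzero ternary increments.

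The missing idea is the parity-coset decomposition combined with \cref{lem:coupling}.
\begin{itemize}
\item Write $x=2y+\eta$ with $y\in(\mathbb Z/m\mathbb Z)^n$ and $\eta\in\{0,1\}^n$, and set $F_\eta(y)=F(2y+\eta)$.
\item Since $m$ is even, $x+me_j=2(y+\frac m2e_j)+\eta$. So $D_{\longop}$ becomes the antipodal difference on a torus of side $M=m$, for which $m\ge4$ and $n\le m^q$ are exactly the hypotheses of \eqref{eq:CWX}.
\item A ternary step $\delta$ on a coset is the step $2\delta$ in $G_{m,n}$. Applying \eqref{eq:CWX} to each $F_\eta$ and averaging $p$-th powers over $\eta$ gives
\[
\norm{D_{\longop}F}_{L_p(G_{m,n}\times[n];\ell_1^N)}\le 11mn^{-1/q}\Bigl(\E_{x,\delta\in\{-1,0,1\}^n}\norm{F(x+2\delta)-F(x)}_1^p\Bigr)^{1/p}.
\]
\item Now couple $2\delta=\varepsilon+\varepsilon'$, where $\delta$ is uniform on $\{-1,0,1\}^n$ and $\varepsilon,\varepsilon'$ are each uniform on $\Sigma_n$. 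Coordinatewise, $(\varepsilon_i,\varepsilon_i')$ takes the values $(1,1),(-1,-1),(1,-1),(-1,1)$ with probabilities $1/3,1/3,1/6,1/6$.
\item The triangle inequality through the midpoint $x+\varepsilon$ then bounds the right-hand side by $2\norm{D_{\sgn}F}_{L_p(G_{m,n}\times\Sigma_n;\ell_1^N)}$.
\end{itemize}
This is where $22=2\cdot11$ comes from. Without this step, your Step 1 assumes the inequality it is supposed to prove.
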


The lower bound \(n^{1/q}\) concerns the regular norm; it does not assert the same growth for the operator norm of every linear lifting.

We finally consider selections of the liftings that are homogeneous or continuous as the datum varies. A selection of the solution fibres is a map \(\Phi\) with \(\Phi(g)\in\mathcal H(g)\) for every \(g\). It is odd if \(\Phi(-g)=-\Phi(g)\), and positively homogeneous if \(\Phi(tg)=t\Phi(g)\) for \(t\ge0\).

\begin{proposition}\label{prop:sections}
Let \(1\le p<\infty\) and \(n,m\in\N\). Suppose that \(X\) is finite dimensional and \(C_{p,X}(n,m)<\infty\). Then the fibres in \eqref{eq:Lpxdual} admit an odd positively homogeneous selection of minimal-norm representatives. For every \(\varepsilon>0\), they also admit a continuous odd positively homogeneous selection \(\Phi_\varepsilon\) satisfying
\[
\|\Phi_\varepsilon(g)\|_{L_{p'}(G_{m,n}\times\Sigma_n;X^*)}\le(1+\varepsilon)C_{p,X}(n,m)\|g\|_{L_{p'}(G_{m,n}\times[n];X^*)}.
\]
\end{proposition}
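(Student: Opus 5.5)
The plan is to work entirely in finite dimensions. Since \(X\) is finite dimensional, so is \(X^*\), and the spaces \(V=L_{p'}(G_{m,n}\times[n];X^*)\) and \(W=L_{p'}(G_{m,n}\times\Sigma_n;X^*)\) are finite-dimensional normed spaces. By \cref{thm:duality}, \(\Lambda_{p',X^*}(n,m)=C_{p,X}(n,m)<\infty\), so every fibre \(\mathcal H(g)\) is nonempty. Each fibre is an affine translate \(h_0+K\) of the linear subspace \(K=\ker D_{\sgn}^*\subseteq W\). Because \(K\) is finite dimensional, the norm attains its infimum on every fibre, and the set \(M(g)\) of minimal-norm representatives is nonempty, compact and convex. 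Moreover, by the definition of \(\Lambda\), every \(h\in M(g)\) satisfies \(\|h\|\le C_{p,X}(n,m)\|g\|\).

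\textbf{Homogeneous selection of minimizers.} Linearity gives \(\mathcal H(tg)=t\mathcal H(g)\) for every real \(t\), and hence \(M(tg)=tM(g)\); for \(t=0\) we have \(M(0)=\{0\}\). Choose a Hamel basis, or simply use the unit sphere \(S_V\) of \(V\), and split \(S_V\) into disjoint sets \(P\) and \(-P\) containing one element from each antipodal pair \(\{g,-g\}\). For \(g\in P\) pick any \(\Phi(g)\in M(g)\); selecting a canonical element, for instance the minimizer of a fixed strictly convex auxiliary norm on the compact convex set \(M(g)\), avoids invoking choice. Then define
\[
\Phi(\pm tg)=\pm t\Phi(g),\qquad t\ge0,\ g\in P,
\]
and \(\Phi(0)=0\). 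The identity \(M(tg)=tM(g)\) shows that this is an odd, positively homogeneous selection of minimal-norm representatives.

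\textbf{Continuous approximate selection.} The step I expect to need the most care is continuity, since minimizers need not depend continuously on \(g\). The plan is to avoid Michael's selection theorem and use linearity instead. Pick a basis \(g_1,\dots,g_d\) of \(V\) and lifts \(h_i\in\mathcal H(g_i)\). The linear map \(L(\sum a_ig_i)=\sum a_ih_i\) is then a continuous odd homogeneous selection, because \(D_{\sgn}^*\) and \(D_{\longop}^*\) are linear. However, its norm bound is poor, so it must be improved locally.

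On the compact sphere \(S_V\) I would use a finite cover by small open balls \(U_k\) centred at points \(g^{(k)}\). Fix minimal lifts \(h^{(k)}\in M(g^{(k)})\) and define local selections
\[
\psi_k(g)=h^{(k)}+L\bigl(g-g^{(k)}\bigr).
\]
Each \(\psi_k\) is an affine selection, since \(D_{\sgn}^*\psi_k(g)=D_{\longop}^*g\), and on \(U_k\) it satisfies
\[
\|\psi_k(g)\|\le C\,\|g^{(k)}\|+\|L\|\,\|g-g^{(k)}\|\le(1+\varepsilon)C,
\]
provided the radii are at most \(\varepsilon C/\|L\|\). The degenerate case \(C=0\) forces \(D_{\longop}^*=0\), so \(\Phi_\varepsilon=0\) works there.

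Next, take a partition of unity \(\{\rho_k\}\) on \(S_V\) subordinate to the cover and symmetrize it by replacing \(\psi_k\) with the averaged odd version
\[
\Psi(g)=\tfrac12\Bigl(\sum_k\rho_k(g)\psi_k(g)-\sum_k\rho_k(-g)\psi_k(-g)\Bigr).
\]
This is still a selection, because each fibre is affine and \(\mathcal H(-g)=-\mathcal H(g)\). By convexity of the norm, \(\|\Psi(g)\|\le(1+\varepsilon)C\) on \(S_V\).

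Finally, set \(\Phi_\varepsilon(g)=\|g\|\Psi(g/\|g\|)\) for \(g\neq0\) and \(\Phi_\varepsilon(0)=0\). This map is odd and positively homogeneous, continuous away from \(0\), and continuous at \(0\) by the bound \(\|\Phi_\varepsilon(g)\|\le(1+\varepsilon)C\|g\|\). It therefore satisfies all the requirements of the proposition.
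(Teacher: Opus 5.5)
Your proposal is correct. The minimal-norm part is essentially the paper's argument. The paper also takes the unique minimizer of an auxiliary Euclidean norm on the compact convex set $M(g)$ and reads off oddness and positive homogeneity from uniqueness. Once you make that canonical choice, splitting the sphere into $P$ and $-P$ is unnecessary. Strictly, $\mathcal H(tg)=t\mathcal H(g)$ holds only for $t\ne0$, since $\mathcal H(0)=\ker D_{\sgn}^*$; you treat $t=0$ separately, so nothing breaks.

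For the continuous selection you take a genuinely different route.
The paper regularizes. It minimizes the strictly convex, $2$-homogeneous energy $J_\theta(h)=\|h\|^2+\theta|h|_E^2$ over each fibre, normalized so that $|\cdot|_E\le\|\cdot\|$. Oddness and homogeneity again follow from uniqueness. The bound $\|\Psi_\theta(g)\|\le\sqrt{1+\theta}\,C_{p,X}(n,m)\|g\|$ comes from comparison with a minimal-norm lift. Continuity is a Berge-type argument in which a linear right inverse $Q$ of $D_{\sgn}^*$ on its range produces competitors in nearby fibres.

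You instead glue affine selections $\psi_k=h^{(k)}+L(\cdot-g^{(k)})$, anchored at minimal lifts on a finite net of the compact sphere, by a partition of unity. This is legitimate because the fibres are affine and the norm is convex. You then impose oddness and homogeneity by symmetrization and radial extension.

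Your route needs no strict convexity, uniqueness, or Euclidean structure; it is essentially the finite-dimensional Bartle--Graves/Michael argument. With Lipschitz partitions of unity it even yields a Lipschitz $\Phi_\varepsilon$, with an $\varepsilon$-dependent constant. The paper's route instead gives a canonical selection defined by a single variational formula, with the symmetries automatic rather than built in by hand.
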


The classical Bartle--Graves theorem guarantees the existence of continuous positively homogeneous right inverses for surjective bounded linear operators \cite{BG52}. The proposition additionally provides minimal-norm selections and continuous selections with an arbitrarily small loss in the optimal bound. For a nonlinear selection \(\Phi\), an estimate \(\|\Phi(g)\|\le K\|g\|\) is a pointwise norm bound and does not assert that \(\Phi\) is \(K\)-Lipschitz.

The auxiliary results are collected in \cref{sec:preliminaries}. The proofs of the main results are given in \cref{sec:main-proofs}.

\section{Duality, regular norms, and increment comparison}\label{sec:preliminaries}

We first record the quotient-duality principle and the matrix description of the regular norm on finite atomic spaces. We then recall the sharp \(L_1\) estimate from joint work with Cheng and Xiang \cite{CWX26} and prove the increment comparison needed for its application on parity cosets.

\subsection{Quotient duality}

We record the following standard consequence of the Hahn--Banach theorem for later use.

\begin{lemma}\label{lem:abstract-duality}
Let \(E,U,V\) be Banach spaces and let \(A:E\to V\), \(B:E\to U\) be bounded linear operators. Set
\[
\mathfrak C(A,B) = \inf\bigl\{K\ge0:\norm{Bx}_U\le K\norm{Ax}_V \text{ for every }x\in E\bigr\},
\]
with the infimum equal to \(+\infty\) when the set is empty, and
\[
\mathfrak L(A,B) =\sup_{u^*\ne0} \frac{\inf\{\norm{v^*}:A^*v^*=B^*u^*\}}{\norm{u^*}}.
\]
The inner infimum is \(+\infty\) if the constraint has no solution, and an empty supremum is zero. Then \(\mathfrak L(A,B)=\mathfrak C(A,B)\). If \(\mathfrak C(A,B)<\infty\), the inner infimum is attained for every \(u^*\in U^*\).
\end{lemma}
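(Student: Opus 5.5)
We prove \(\mathfrak L(A,B)\le\mathfrak C(A,B)\) and \(\mathfrak C(A,B)\le\mathfrak L(A,B)\) separately, using Hahn--Banach on the range of \(A\) for the first and a pairing argument for the second.

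\emph{First inequality.} If \(\mathfrak C(A,B)=\infty\) there is nothing to prove, so assume \(K=\mathfrak C(A,B)<\infty\). The infimum defining \(\mathfrak C\) is attained, since the condition \(\norm{Bx}\le K\norm{Ax}\) is closed in \(K\). Then \(\ker A\subseteq\ker B\). Fix \(u^*\in U^*\) and define a functional \(\phi\) on \(\ran A\subseteq V\) by \(\phi(Ax)=u^*(Bx)\). It is well defined because \(\ker A\subseteq\ker B\), and it satisfies
\[
|\phi(Ax)|\le\norm{u^*}\norm{Bx}\le K\norm{u^*}\norm{Ax}.
\]
By Hahn--Banach, \(\phi\) extends to some \(v^*\in V^*\) with \(\norm{v^*}\le K\norm{u^*}\). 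Then \(A^*v^*=B^*u^*\), since \(v^*(Ax)=u^*(Bx)\) for all \(x\). Hence the inner infimum is at most \(K\norm{u^*}\), which gives \(\mathfrak L\le\mathfrak C\). The infimum is also attained: the set \(\{v^*:A^*v^*=B^*u^*\}\) is a nonempty weak\(^*\)-closed affine subspace, and balls in \(V^*\) are weak\(^*\)-compact by Banach--Alaoglu, while the norm is weak\(^*\)-lower semicontinuous. This proves the attainment statement.

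\emph{Second inequality.} Assume \(L=\mathfrak L(A,B)<\infty\). Fix \(x\in E\) and \(\delta>0\). Pick \(u^*\in U^*\) with \(\norm{u^*}=1\) and \(u^*(Bx)=\norm{Bx}\), by Hahn--Banach. Choose \(v^*\) with \(A^*v^*=B^*u^*\) and \(\norm{v^*}\le L+\delta\). Then
\[
\norm{Bx}=u^*(Bx)=(B^*u^*)(x)=(A^*v^*)(x)=v^*(Ax)\le(L+\delta)\norm{Ax}.
\]
Letting \(\delta\to0\) gives \(\norm{Bx}\le L\norm{Ax}\) for every \(x\), so \(\mathfrak C\le L\).

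Conventions and degenerate cases are consistent with this argument. If \(\mathfrak L<\infty\), then every fibre is nonempty, and the argument above shows \(\ker A\subseteq\ker B\). The case \(U=\{0\}\) gives \(\mathfrak L=0\) as an empty supremum, and correspondingly \(\mathfrak C=0\). In the case \(\mathfrak C=\infty\), the reverse inequality forces \(\mathfrak L=\infty\). No real obstacle is expected. The only points needing care are that the infimum in \(\mathfrak C\) is attained, the well-definedness of \(\phi\), and weak\(^*\) closedness of the fibre, which holds because \(A^*\) is weak\(^*\)-continuous.
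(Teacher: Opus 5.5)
Your proof is correct and follows the paper's argument: a Hahn--Banach extension of \(Ax\mapsto u^*(Bx)\) from \(\ran A\) gives \(\mathfrak L\le\mathfrak C\), and the pairing \(u^*(Bx)=v^*(Ax)\) against a norming \(u^*\) gives \(\mathfrak C\le\mathfrak L\). The only difference is the attainment step, where your Banach--Alaoglu and weak\(^*\)-lower-semicontinuity argument is valid but more than is needed: the paper observes that every feasible \(v^*\) restricts to \(\phi\) on \(\ran A\), so \(\norm{v^*}\ge\norm{\phi}\), and a norm-preserving Hahn--Banach extension is therefore already a minimizer, with minimum value \(\norm{\phi}\).
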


\begin{proof}
Assume first that \(\mathfrak C(A,B)<\infty\). Then \(\ker A\subseteq\ker B\). For each \(u^*\in U^*\), the formula
\[
\lambda_{u^*}(Ax)=\langle Bx,u^*\rangle
\]
therefore defines a linear functional on \(A(E)\), and
\[
|\lambda_{u^*}(Ax)|\le\|u^*\|\|Bx\|
\le\mathfrak C(A,B)\|u^*\|\|Ax\|.
\]
The norm-preserving Hahn--Banach theorem gives an extension \(v^*\in V^*\) with \(\|v^*\|=\|\lambda_{u^*}\|\). The extension identity is precisely \(A^*v^*=B^*u^*\), so
\[
\inf\{\|v^*\|:A^*v^*=B^*u^*\}
\le\mathfrak C(A,B)\|u^*\|.
\]
Taking the supremum over nonzero \(u^*\) gives \(\mathfrak L(A,B)\le\mathfrak C(A,B)\). Moreover, every feasible \(v^*\) restricts to \(\lambda_{u^*}\) on \(A(E)\), and hence has norm at least \(\|\lambda_{u^*}\|\). The norm-preserving extension thus attains the infimum.

Conversely, suppose that \(\mathfrak L(A,B)<\infty\), and fix \(\eta>0\). For every nonzero \(u^*\in U^*\), choose \(v^*\in V^*\) such that
\[
A^*v^*=B^*u^*,\qquad
\|v^*\|\le(\mathfrak L(A,B)+\eta)\|u^*\|.
\]
For \(u^*=0\), take \(v^*=0\). For every \(x\in E\), the adjoint identity gives
\[
|\langle Bx,u^*\rangle|=|\langle Ax,v^*\rangle|
\le(\mathfrak L(A,B)+\eta)\|Ax\|\|u^*\|.
\]
Taking the supremum over \(\|u^*\|\le1\) and then letting \(\eta\downarrow0\) yields \(\mathfrak C(A,B)\le\mathfrak L(A,B)\). Thus equality holds whenever either constant is finite; otherwise both are infinite.
\end{proof}

\subsection{Regular norms on finite measure spaces}

We first express the regular norm in terms of the absolute value of a finite kernel. This description will also show that passage to adjoints preserves the regular norm.

\begin{lemma}\label{lem:regular-matrix}
Let \(\Omega,\Omega'\) be finite sets with strictly positive measures \(\mu,\nu\), and let \(R:L_r(\Omega,\mu)\to L_r(\Omega',\nu)\) be a linear operator, where \(1\le r\le\infty\). Write
\[
(Rf)(b)=\sum_{a\in\Omega}k(b,a)f(a)\mu(a),
\qquad b\in\Omega',
\]
and define \(|R|\) by replacing \(k(b,a)\) with \(|k(b,a)|\). Then
\[
\|R\|_{\mathrm{reg}}=\sup_{N\ge1}\|R\otimes I_{\ell_\infty^N}\|=\||R|\|.
\]
Moreover, \(|R^*|=|R|^*\) and \(\|R^*\|_{\mathrm{reg}}=\|R\|_{\mathrm{reg}}\).
\end{lemma}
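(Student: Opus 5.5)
We prove the two inequalities \(\|R\|_{\mathrm{reg}}\le\||R|\|\) and \(\||R|\|\le\|R\|_{\mathrm{reg}}\) separately, and then derive the adjoint statement from the kernel description.

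\emph{Upper bound.} Fix \(N\) and \(F:\Omega\to\ell_\infty^N\). For each \(b\), the triangle inequality in \(\ell_\infty^N\) gives
\[
\|(R\otimes I)F(b)\|_\infty\le\sum_a|k(b,a)|\,\|F(a)\|_\infty\,\mu(a)=\bigl(|R|\,\|F(\cdot)\|_\infty\bigr)(b).
\]
Taking \(L_r(\Omega',\nu)\) norms yields \(\|(R\otimes I)F\|\le\||R|\|\,\|F\|\), since the scalar function \(a\mapsto\|F(a)\|_\infty\) has the same \(L_r\) norm as \(F\).

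\emph{Lower bound.} It suffices to bound \(\||R|f\|\) for \(f\ge0\), because \(|R|\) is a positive operator and \(||R|f|\le|R|\,|f|\). Let \(N=2^{|\Omega|}\) and index the coordinates of \(\ell_\infty^N\) by sign patterns \(\sigma\in\{-1,1\}^\Omega\). Define \(F(a)=(\sigma(a)f(a))_\sigma\). Then \(\|F(a)\|_\infty=f(a)\), so \(\|F\|=\|f\|\). Moreover,
\[
\|(R\otimes I)F(b)\|_\infty=\max_\sigma\Bigl|\sum_a k(b,a)\sigma(a)f(a)\mu(a)\Bigr|,
\]
and choosing \(\sigma(a)=\sgn k(b,a)\) shows this maximum equals \((|R|f)(b)\). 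Hence \(\||R|f\|\le\|R\otimes I_{\ell_\infty^N}\|\,\|f\|\), which gives \(\||R|\|\le\|R\|_{\mathrm{reg}}\). Since \(N\) depends only on \(|\Omega|\), the supremum in \eqref{eq:regular-norm} is in fact attained at a finite \(N\). The same argument works verbatim for \(r=\infty\).

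\emph{Adjoints.} Under the pairings \(\langle f,g\rangle=\sum f g\,\mu\) on \(\Omega\) and the corresponding pairing with \(\nu\) on \(\Omega'\), we have
\[
\langle Rf,g\rangle=\sum_{a,b}k(b,a)f(a)g(b)\mu(a)\nu(b).
\]
Thus \(R^*\) has kernel \(k^*(a,b)=k(b,a)\) with respect to \(\nu\). Taking absolute values commutes with this transposition, so \(|R^*|=|R|^*\). Applying the first part to \(R^*\) on \(L_{r'}\) then gives
\[
\|R^*\|_{\mathrm{reg}}=\||R|^*\|=\||R|\|=\|R\|_{\mathrm{reg}},
\]
using that an operator and its adjoint between finite-dimensional \(L_r\) and \(L_{r'}\) spaces have equal norms.

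The only delicate point is the lower bound: one must choose the sign-pattern embedding so that a single \(N\) handles all \(b\) simultaneously. The \(2^{|\Omega|}\) sign patterns achieve this.
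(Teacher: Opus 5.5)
Your proof is correct and follows the paper's argument essentially step for step. The upper bound uses the pointwise triangle-inequality domination by \(|R|\), the lower bound uses the \(2^{|\Omega|}\) sign-pattern embedding with the \(b\)-dependent choice of pattern, and the adjoint claim follows from kernel transposition together with equality of an operator's norm and its adjoint's norm. One trivial fix: when \(k(b,a)=0\), take \(\sigma(a)\in\{-1,1\}\) arbitrarily rather than the sign of \(0\).
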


\begin{proof}
For every finite family \(f_1,\ldots,f_N\), the pointwise inequality
\[
\max_{1\le i\le N}|Rf_i|
\le |R|\!\left(\max_{1\le i\le N}|f_i|\right)
\]
gives \(\|R\otimes I_{\ell_\infty^N}\|\le\||R|\|\). Taking the supremum over \(N\) proves one inequality.

For the reverse inequality, fix a nonzero \(f\ge0\). Index the coordinates of \(\ell_\infty^{2^{|\Omega|}}\) by the \(2^{|\Omega|}\) sign patterns \(\sigma\in\{-1,1\}^{\Omega}\), and set \(F(a)_\sigma=\sigma(a)f(a)\). Then \(\|F(a)\|_\infty=f(a)\). For each \(b\in\Omega'\), choose \(\sigma_b\) so that \(k(b,a)\sigma_b(a)=|k(b,a)|\) for every \(a\). The corresponding coordinate satisfies
\[
\bigl((R\otimes I)F(b)\bigr)_{\sigma_b}
=\sum_{a\in\Omega}|k(b,a)|f(a)\mu(a)=(|R|f)(b).
\]
Consequently,
\[
\||R|f\|_{L_r(\Omega',\nu)}
\le\|(R\otimes I)F\|_{L_r(\Omega',\nu;\ell_\infty^{2^{|\Omega|}})}
\le\|R\|_{\mathrm{reg}}\|f\|_{L_r(\Omega,\mu)}.
\]
Positivity gives
\[
\||R|\|=\sup\bigl\{\||R|f\|_{L_r(\Omega',\nu)}:f\ge0,\ \|f\|_{L_r(\Omega,\mu)}\le1\bigr\},
\]
since \(\bigl||R|f\bigr|\le |R|\,|f|\) and \(\||f|\|_{L_r}=\|f\|_{L_r}\). Hence \(\||R|\|\le\|R\|_{\mathrm{reg}}\).

Finally, the canonical dual pairings give
\[
(R^*g)(a)=\sum_{b\in\Omega'}k(b,a)g(b)\nu(b).
\]
Thus taking the absolute value of the kernel commutes with taking the adjoint, so \(|R^*|=|R|^*\). Applying the first part to \(R^*\) and using the usual equality of an operator norm and its adjoint norm gives
\[
\|R^*\|_{\mathrm{reg}}=\||R^*|\|=\||R|^*\|=\||R|\|=\|R\|_{\mathrm{reg}}.
\]
\end{proof}

\subsection{A sharp \texorpdfstring{\(L_1\)}{L1} metric-cotype estimate}\label{sec:L1-input}

For the \(L_1\) application, we use the following estimate established jointly with Cheng and Xiang \cite[Corollary~1.2]{CWX26}. If \(2\le q<\infty\), \(1\le p\le q\), \(M\ge4\) is even, \(n\le M^q\), and \(f:(\mathbb Z/M\mathbb Z)^n\to L_1\), then

\begin{equation}\label{eq:CWX}
\begin{split}
&\left(\sum_{j=1}^n\E_x\norm{f(x+\tfrac M2e_j)-f(x)}_1^p\right)^{1/p}\\
&\qquad\le 11M n^{1/p-1/q}\left(\E_{x,\delta\in\{-1,0,1\}^n}\norm{f(x+\delta)-f(x)}_1^p\right)^{1/p}.
\end{split}
\end{equation}

We apply \eqref{eq:CWX} below to obtain the \(L_1\) lifting estimate in \cref{cor:L1}.

\subsection{Comparison of increments}

The estimate \eqref{eq:CWX}, applied on parity cosets, involves doubled ternary increments. The following coupling bounds them in terms of sign increments.

\begin{lemma}\label{lem:coupling}
For every Banach space \(X\), every \(1\le p<\infty\), and every \(F:G_{m,n}\to X\),
\[
\left( \E_{x,\delta\in\{-1,0,1\}^n} \norm{F(x+2\delta)-F(x)}_X^p \right)^{1/p} \le 2\norm{D_{\sgn}F}_{L_p(G_{m,n}\times\Sigma_n;X)}.
\]
\end{lemma}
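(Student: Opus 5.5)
The plan is a coupling argument: write each doubled ternary increment \(2\delta\) as a sum of two sign vectors, and then use the triangle inequality in \(L_p\) together with translation invariance.

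For \(\delta\in\{-1,0,1\}^n\), look for sign vectors \(\varepsilon,\varepsilon'\in\Sigma_n\) with \(\varepsilon+\varepsilon'=2\delta\). Coordinatewise, take \(\varepsilon_j=\varepsilon'_j=\delta_j\) when \(\delta_j\ne0\). When \(\delta_j=0\), take \(\varepsilon_j=\eta_j\) and \(\varepsilon'_j=-\eta_j\), where \(\eta_j\in\{-1,1\}\) is chosen uniformly and independently.

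The key distributional step is the following. Let \(\delta\) be uniform on \(\{-1,0,1\}^n\) and \(\eta\) be uniform on \(\Sigma_n\), independent of \(\delta\). Then \(\varepsilon\) is uniform on \(\Sigma_n\): each coordinate \(\varepsilon_j\) equals \(\pm1\) with probability \(1/3+1/3\cdot1/2=1/2\), and coordinates are independent. By the symmetric construction, \(\varepsilon'\) is also uniform on \(\Sigma_n\).

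Next, fix \(x\) and split the increment as
\[
F(x+2\delta)-F(x)=\bigl(F(x+\varepsilon+\varepsilon')-F(x+\varepsilon)\bigr)+\bigl(F(x+\varepsilon)-F(x)\bigr).
\]
Average the \(p\)-th power over \(x\), \(\delta\) and \(\eta\), and apply Minkowski's inequality in \(L_p\) of the joint probability space.

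The second term has \(L_p\)-norm \(\bigl(\E_{x,\varepsilon}\|F(x+\varepsilon)-F(x)\|^p\bigr)^{1/p}=\|D_{\sgn}F\|\), since \(\varepsilon\) is uniform and independent of \(x\).

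For the first term, substitute \(y=x+\varepsilon\). For each fixed \((\delta,\eta)\), the map \(x\mapsto y\) is a translation of \(G_{m,n}\), so \(y\) is uniform and independent of \((\delta,\eta)\), hence independent of \(\varepsilon'\). Thus the first term has norm \(\bigl(\E_{y,\varepsilon'}\|F(y+\varepsilon')-F(y)\|^p\bigr)^{1/p}=\|D_{\sgn}F\|\), using that \(\varepsilon'\) is uniform on \(\Sigma_n\). Adding the two bounds gives the factor \(2\).

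The only delicate point is the joint-distribution claim: we need \(y\) independent of \(\varepsilon'\) and \(\varepsilon'\) uniform. The first follows from translation invariance of normalized counting measure on \(G_{m,n}\) after conditioning on \((\delta,\eta)\); the second is the coordinatewise computation above. Additions are modulo \(2m\), which causes no issue.
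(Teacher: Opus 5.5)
Your proposal is correct and is essentially the paper's proof. Your conditional construction, with \(\varepsilon_j=\varepsilon_j'=\delta_j\) when \(\delta_j\ne0\) and opposite uniform signs when \(\delta_j=0\), yields the same per-coordinate joint law \((1/3,1/3,1/6,1/6)\) that the paper writes down directly; the rest of your argument (telescoping split, Minkowski, translation invariance) matches the paper's.
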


\begin{proof}
For each \(i\in[n]\), choose a pair \((\varepsilon_i,\varepsilon_i')\in\{-1,1\}^2\) with distribution
\[
\begin{array}{c|cccc}
(\varepsilon_i,\varepsilon_i') &(1,1)&(-1,-1)&(1,-1)&(-1,1)\\
\hline
\text{probability}&1/3&1/3&1/6&1/6
\end{array}
\]
independently across coordinates. Set \(\varepsilon=(\varepsilon_i)_{i=1}^n\), \(\varepsilon'=(\varepsilon_i')_{i=1}^n\), and \(\delta=(\varepsilon+\varepsilon')/2\). Each \(\delta_i\) is uniform on \(\{-1,0,1\}\), so \(\delta\) is uniform on \(\{-1,0,1\}^n\). Both sign vectors are uniform on \(\Sigma_n\).

Let \(x\) be uniform on \(G_{m,n}\) and independent of \((\varepsilon,\varepsilon')\). The identity
\[
F(x+2\delta)-F(x)=F(x+\varepsilon+\varepsilon')-F(x+\varepsilon)+F(x+\varepsilon)-F(x)
\]
and Minkowski's inequality give
\[
\begin{aligned}
\bigl(\E\|F(x+2\delta)-F(x)\|_X^p\bigr)^{1/p}
&\le\bigl(\E\|F(x+\varepsilon+\varepsilon')-F(x+\varepsilon)\|_X^p\bigr)^{1/p}\\
&\quad+\bigl(\E\|F(x+\varepsilon)-F(x)\|_X^p\bigr)^{1/p}.
\end{aligned}
\]
Conditional on \((\varepsilon,\varepsilon')\), the translate \(x+\varepsilon\) is still uniform on \(G_{m,n}\). Thus \((x+\varepsilon,\varepsilon')\), like \((x,\varepsilon)\), has the product of the uniform distributions on \(G_{m,n}\) and \(\Sigma_n\). Each term on the right therefore equals \(\|D_{\sgn}F\|_{L_p(G_{m,n}\times\Sigma_n;X)}\), proving the claim.
\end{proof}

\section{Proofs of the main results}\label{sec:main-proofs}

We first identify the lifting constants by duality. We then prove the exact regular factorization formula, apply the sharp metric-cotype estimate, and construct the nonlinear selections.

\subsection{Duality for the difference operators}

\begin{proof}[Proof of \cref{thm:duality}]
Apply \cref{lem:abstract-duality} with
\[
\begin{gathered}
E=L_p(G_{m,n};X),\qquad V=L_p(G_{m,n}\times\Sigma_n;X),\\
U=L_p(G_{m,n}\times[n];X),
\end{gathered}
\]
and \(A=D_{\sgn}\), \(B=D_{\longop}\). Since the underlying measure spaces are finite, for every such space \(\Omega\) one has
\[
 L_p(\Omega;X)^*=L_{p'}(\Omega;X^*)
\]
canonically and isometrically with respect to the normalized finite-sum pairings. Under these identifications, \(A^*=D_{\sgn}^*\) and \(B^*=D_{\longop}^*\). With the kernel and empty-set conventions in \eqref{eq:Cpx} and \eqref{eq:Lpxdual}, the constants \(\mathfrak C(A,B)\) and \(\mathfrak L(A,B)\) are exactly \(C_{p,X}(n,m)\) and \(\Lambda_{p',X^*}(n,m)\), respectively. The lemma gives their equality.
\end{proof}

\begin{proof}[Proof of \cref{cor:cotype}]
Apply \cref{lem:abstract-duality} with \(A=D_{\sgn}\) and \(B=\mathcal D_{q,\longop}\) on the spaces used in the proof of \cref{thm:duality}. If the difference inequality holds with constant \(\Gamma\), the norm-preserving Hahn--Banach extension in the proof of the lemma gives a lifting for every datum with the same constant \(\Gamma\). Conversely, the lifting estimate implies the difference inequality with that constant. Thus the optimal constants coincide.
\end{proof}

\subsection{Exact regular factorization}

\begin{proof}[Proof of \cref{thm:regular}]
We first compute the forward factorization constant \(\mathfrak R_p(n,m)\).

We begin with the upper bound. Assume that \(m\) is even. To express a coordinate antipodal difference as a sum of sign differences, we construct paths of length \(m\) whose endpoints differ by \(me_j\). Averaging these paths will ensure that each step has the uniform sign distribution. Fix \(j\in[n]\). We construct a random sequence \(\varepsilon^1,\ldots,\varepsilon^m\in\Sigma_n\). Choose a uniform sign \(\sigma\), set \(\varepsilon_j^r=\sigma\) for every \(r\), and, independently of \(\sigma\) and independently for each \(k\ne j\), choose uniformly a subset \(A_k\subseteq[m]\) of cardinality \(m/2\) and put
\[
\varepsilon_k^r=
 \begin{cases}
1,&r\in A_k,\\
-1,&r\notin A_k.
 \end{cases}
\]
Then
\[
\sum_{r=1}^m\varepsilon^r=\sigma m e_j,
\]
which represents \(me_j\) in \(G_{m,n}\), since \(-m\equiv m\pmod{2m}\). Put \(s_0=0\) and \(s_r=\sum_{a=1}^r\varepsilon^a\), and define
\begin{equation}\label{eq:pathT}
(Tu)(x,j) =\E_\omega\sum_{r=1}^m u(x+s_{r-1},\varepsilon^r).
\end{equation}

Here \(\E_\omega\) denotes expectation over the random path just constructed for the fixed coordinate \(j\). For \(u=D_{\sgn}f\), the sum telescopes:
\[
\sum_{r=1}^m \bigl(f(x+s_r)-f(x+s_{r-1})\bigr)=f(x+me_j)-f(x).
\]
Hence \(TD_{\sgn}=D_{\longop}\). It remains to estimate the regular norm. For a Banach space \(Y\) and \(u:G_{m,n}\times\Sigma_n\to Y\), Minkowski's inequality followed by Jensen's inequality gives
\[
\begin{aligned}
\|(T\otimes I_Y)u\|_{L_p(G_{m,n}\times[n];Y)}
&\le\sum_{r=1}^m\bigl(\E_{j,x}\|\E_\omega u(x+s_{r-1},\varepsilon^r)\|_Y^p\bigr)^{1/p}\\
&\le\sum_{r=1}^m\bigl(\E_{j,x}\E_\omega\|u(x+s_{r-1},\varepsilon^r)\|_Y^p\bigr)^{1/p}.
\end{aligned}
\]
For each fixed \(j,r\), every coordinate of \(\varepsilon^r\) is a uniform sign: for \(k\ne j\), the probability that \(r\in A_k\) is \(1/2\). The coordinates are independent by construction, so \(\varepsilon^r\) is uniform on \(\Sigma_n\). Conditioned on the path, the point \(x+s_{r-1}\) is uniform on \(G_{m,n}\). Thus each expectation in the last line equals \(\E_{x,\varepsilon}\|u(x,\varepsilon)\|_Y^p\), and
\[
\|(T\otimes I_Y)u\|_{L_p(G_{m,n}\times[n];Y)}
\le m\|u\|_{L_p(G_{m,n}\times\Sigma_n;Y)}.
\]
Taking \(Y=\ell_\infty^N\) and then the supremum over \(N\) gives \(\|T\|_{\mathrm{reg}}\le m\).

To obtain the matching lower bound, we test an arbitrary factorization on a Banach-valued mapping whose sign increments have norm one and whose coordinate antipodal increments have norm \(m\). Consider the graph on \(G_{m,n}\) whose edges join \(x\) to \(x+\varepsilon\) for \(\varepsilon\in\Sigma_n\). Let \(d_\Sigma\) be the shortest-path distance within each connected component. Since \(m\) is even, the balanced paths constructed above join \(x\) to \(x+me_j\). Every sign edge has length one, and

\begin{equation}\label{eq:worddistance}
d_\Sigma(x,x+me_j)=m.
\end{equation}

Indeed, an \(L\)-step path changes the \(j\)-th coordinate by an integer of absolute value at most \(L\), while reaching \(me_j\) modulo \(2m\) requires absolute value at least \(m\). Equality is attained by the balanced \(m\)-step paths used above.

For each connected component \(C\), the map \(x\mapsto(d_\Sigma(x,a))_{a\in C}\) is an isometric embedding into \(\ell_\infty^C\): the triangle inequality bounds every coordinate difference by \(d_\Sigma(x,y)\), and the coordinate \(a=x\) attains this bound. Combine these embeddings in disjoint coordinate blocks, setting all blocks other than the one containing \(x\) equal to zero. Every sign edge stays in one component, and the balanced path above shows that \(x\) and \(x+me_j\) lie in the same component. We therefore obtain \(F:G_{m,n}\to\ell_\infty^M\) for some finite \(M\) such that
\[
\norm{D_{\sgn}F}_{L_p(G_{m,n}\times\Sigma_n;\ell_\infty^M)}=1,
\qquad
\norm{D_{\longop}F}_{L_p(G_{m,n}\times[n];\ell_\infty^M)}=m.
\]
If \(TD_{\sgn}=D_{\longop}\), the factorization identity remains valid after amplification by \(I_{\ell_\infty^M}\). Therefore
\[
m=\|D_{\longop}F\|=\|(T\otimes I_{\ell_\infty^M})D_{\sgn}F\|
\le\|T\|_{\mathrm{reg}}\|D_{\sgn}F\|=\|T\|_{\mathrm{reg}}.
\]
This proves \(\mathfrak R_p(n,m)=m\).

It remains to consider the one-dimensional case and odd values of \(m\). If \(n=1\), the path construction above works for every \(m\): take all \(m\) increments equal to one common uniform sign \(\sigma\). Since \(\sigma m\equiv m\pmod{2m}\), the endpoint is again the antipodal point, and the norm estimate is unchanged. The same distance argument gives the lower bound \(m\).

If \(n\ge2\) and \(m\) is odd, define
\[
f(x)=(-1)^{x_1+x_2}.
\]
This is well defined modulo \(2m\). Every sign increment changes \(x_1+x_2\) by an even integer, so \(D_{\sgn}f=0\). Translation by \(me_1\) changes its parity, so \(f(x+me_1)=-f(x)\) and \(D_{\longop}f\ne0\). The identity \(TD_{\sgn}=D_{\longop}\) is consequently impossible.

It remains to identify the adjoint factorization constant. All scalar spaces involved are finite dimensional. Taking adjoints sends \(TD_{\sgn}=D_{\longop}\) to \(D_{\sgn}^*T^*=D_{\longop}^*\). Conversely, under the canonical identifications with the biduals, \(D_{\sgn}^*S=D_{\longop}^*\) implies \(S^*D_{\sgn}=D_{\longop}\). Thus \(T\mapsto T^*\) is a bijection between the two classes of factorizations. \cref{lem:regular-matrix} shows that this correspondence preserves the regular norm, including at \(p=1\). Hence \(\mathfrak R_{p'}^*(n,m)=\mathfrak R_p(n,m)\), including when the factorization classes are empty. The three assertions now follow from the forward case.
\end{proof}

\begin{proof}[Proof of \cref{cor:gap}]
Set \(c=n^{1/q}/m>0\). The map \(S_0\mapsto cS_0\) is a bijection from the operators satisfying \(D_{\sgn}^*S_0=D_{\longop}^*\) to those satisfying \(D_{\sgn}^*S=\mathcal D_{q,\longop}^*\), with inverse \(S\mapsto c^{-1}S\). Since \(\|cS_0\|_{\mathrm{reg}}=c\|S_0\|_{\mathrm{reg}}\), \cref{thm:regular} yields
\[
\inf\bigl\{\|S\|_{\mathrm{reg}}:D_{\sgn}^*S=\mathcal D_{q,\longop}^*\bigr\}
=cm=n^{1/q}.
\]
Scaling an optimal unnormalized factorization attains this value.
\end{proof}

\subsection{The \texorpdfstring{\(L_1\)}{L1} lifting estimate}

\begin{proof}[Proof of \cref{cor:L1}]
We apply the estimate from joint work with Cheng and Xiang, stated in \eqref{eq:CWX}, on parity cosets so that the long differences become antipodal differences on a torus of side length \(m\). Let \(F:G_{m,n}\to\ell_1^N\). Every \(x\in G_{m,n}\) has a unique representation
\[
x=2y+\eta,\qquad y\in(\mathbb Z/m\mathbb Z)^n,\quad \eta\in\{0,1\}^n.
\]
For fixed \(\eta\), set \(F_\eta(y)=F(2y+\eta)\). Since \(m\) is even,
\[
x+me_j=2\left(y+\frac m2e_j\right)+\eta.
\]
Thus translation by \(me_j\) preserves the parity coset and corresponds to the antipodal translation on the torus of side length \(m\). Apply \eqref{eq:CWX} with \(M=m\) to \(F_\eta\); its hypotheses hold because \(m\ge4\), \(n\le m^q\), and \(\ell_1^N\) is an \(L_1\)-space. Taking \(p\)-th powers and averaging over \(\eta\) gives
\[
\sum_{j=1}^n\E_x\|F(x+me_j)-F(x)\|_1^p
\le (11m)^p n^{1-p/q}\E_{x,\delta\in\{-1,0,1\}^n}\|F(x+2\delta)-F(x)\|_1^p.
\]

We now recover the normalized sign-increment estimate. Divide by \(n\), take \(p\)-th roots, and apply \cref{lem:coupling}. By the definition \eqref{eq:Dlong}, this yields
\[
\|D_{\longop}F\|_{L_p(G_{m,n}\times[n];\ell_1^N)}
\le 22m n^{-1/q}\|D_{\sgn}F\|_{L_p(G_{m,n}\times\Sigma_n;\ell_1^N)}.
\]
Equivalently, \((n^{1/q}/m)C_{p,\ell_1^N}(n,m)\le22\), with no dependence on \(N\).

It remains to pass to the adjoint problem. Since \((\ell_1^N)^*=\ell_\infty^N\) isometrically, \cref{cor:cotype} gives \eqref{eq:L1lift}. \cref{cor:gap} gives the optimal scalar regular norm \(n^{1/q}\) for the same normalized identity.
\end{proof}

\begin{remark}
The distinction between the regular norm and the operator norm is already visible for scalar-valued data in the Hilbertian case. Under the assumptions of \cref{cor:L1}, take \(p=q=2\) and scalar-valued data, and let \(A=D_{\sgn}\) and \(B=\mathcal D_{2,\longop}\). The preceding proof gives \(\|Bf\|\le22\|Af\|\), and hence \(\ker A\subseteq\ker B\). Thus \(T_0:\ran A\to L_2(G_{m,n}\times[n])\), defined by \(T_0(Af)=Bf\), is well defined and has norm at most \(22\). If \(P\) is the orthogonal projection onto \(\ran A\), then \(S=(T_0P)^*\) satisfies \(A^*S=B^*\) and \(\|S\|\le22\), while \(\|S\|_{\mathrm{reg}}\ge\sqrt n\).
\end{remark}

\subsection{Nonlinear selections}

\begin{proof}[Proof of \cref{prop:sections}]
Write \(A=D_{\sgn}^*\) and \(B=D_{\longop}^*\), so that \(\mathcal H(g)=\{h:Ah=Bg\}\). Fix a Euclidean norm \(|\cdot|_E\) on the finite-dimensional space \(L_{p'}(G_{m,n}\times\Sigma_n;X^*)\). By rescaling, we may assume that \(|h|_E\le\|h\|\) for every \(h\), where \(\|\cdot\|\) is its given \(L_{p'}\)-norm.

We first choose a minimal-norm representative in each fibre. \cref{thm:duality} shows that every fibre \(\mathcal H(g)\) is nonempty. It is closed and affine. A minimizing sequence is bounded, so finite dimensionality gives a convergent subsequence whose limit attains the minimum. Set
\[
r(g)=\min_{h\in\mathcal H(g)}\|h\|,
\qquad M(g)=\{h\in\mathcal H(g):\|h\|=r(g)\}.
\]
\cref{thm:duality} also gives \(r(g)\le C_{p,X}(n,m)\|g\|\). The set \(M(g)\) is nonempty and compact; it is convex because it is the intersection of the affine fibre with the closed ball of radius \(r(g)\). Hence \(|\cdot|_E^2\) has a unique minimizer on \(M(g)\); denote it by \(\Phi_0(g)\).

For \(t\ne0\), linearity gives
\[
\mathcal H(tg)=t\mathcal H(g),\qquad
r(tg)=|t|r(g),\qquad M(tg)=tM(g).
\]
Since \(|th|_E^2=t^2|h|_E^2\), uniqueness gives \(\Phi_0(tg)=t\Phi_0(g)\). Also, \(M(0)=\{0\}\), so \(\Phi_0(0)=0\). Thus \(\Phi_0\) is odd and positively homogeneous, with \(\|\Phi_0(g)\|=r(g)\).

To obtain a continuous choice, we perturb the squared norm by a strictly convex term. For \(\theta>0\), put
\[
J_\theta(h)=\|h\|^2+\theta|h|_E^2,
\qquad
\Psi_\theta(g)=\operatorname*{argmin}_{h\in\mathcal H(g)}J_\theta(h).
\]
The functional \(J_\theta\) is coercive and strictly convex. It therefore has a unique minimizer on each nonempty closed affine fibre \(\mathcal H(g)\). Comparison with \(h_0=\Phi_0(g)\) gives
\[
\begin{aligned}
\|\Psi_\theta(g)\|^2
&\le J_\theta(\Psi_\theta(g))\le J_\theta(h_0)\\
&\le(1+\theta)\|h_0\|^2
\le(1+\theta)C_{p,X}(n,m)^2\|g\|^2.
\end{aligned}
\]
In particular, \(\|\Psi_\theta(g)\|\le\sqrt{1+\theta}\,C_{p,X}(n,m)\|g\|\).

Uniqueness also gives the algebraic properties of the perturbed selection. For \(t\ne0\), use \(\mathcal H(tg)=t\mathcal H(g)\) and \(J_\theta(th)=t^2J_\theta(h)\). Uniqueness gives \(\Psi_\theta(tg)=t\Psi_\theta(g)\). At \(g=0\), the unique minimizer is zero, since \(J_\theta(0)=0\) and \(J_\theta(h)>0\) for \(h\ne0\). Thus the same identity holds for \(t=0\), and \(\Psi_\theta\) is odd and positively homogeneous.

We next prove continuity by approximating competitors in a limiting fibre by competitors in neighbouring fibres. Since every fibre is nonempty, \(\ran B\subseteq\ran A\). Choose a linear map
\[
Q:\ran A\longrightarrow L_{p'}(G_{m,n}\times\Sigma_n;X^*)
\quad\text{with}\quad AQz=z\quad(z\in\ran A).
\]
Such a map exists and is continuous because the spaces are finite dimensional. Let \(g_k\to g\), fix \(h\in\mathcal H(g)\), and set
\[
h_k=h+QB(g_k-g).
\]
Then \(Ah_k=Ah+B(g_k-g)=Bg_k\), so \(h_k\in\mathcal H(g_k)\), and \(h_k\to h\).

The preceding norm bound shows that \((\Psi_\theta(g_k))\) is bounded. Consider any convergent subsequence, say \(\Psi_\theta(g_{k_\ell})\to h_*\). Continuity of \(A\) and \(B\) gives \(Ah_*=Bg\). For every \(h\in\mathcal H(g)\), minimality gives
\[
J_\theta(\Psi_\theta(g_{k_\ell}))\le J_\theta(h_{k_\ell}).
\]
Passing to the limit shows that \(J_\theta(h_*)\le J_\theta(h)\). Since \(h\) was arbitrary, uniqueness of the minimizer yields \(h_*=\Psi_\theta(g)\). If \(\Psi_\theta(g_k)\) did not converge to \(\Psi_\theta(g)\), there would be an \(\eta>0\) and a subsequence whose distance from \(\Psi_\theta(g)\) is at least \(\eta\). Boundedness and finite dimensionality would give a convergent further subsequence. Its limit would differ from \(\Psi_\theta(g)\), contradicting the preceding identification of all subsequential limits. Thus \(\Psi_\theta(g_k)\to\Psi_\theta(g)\), which proves continuity.

Finally, given \(\varepsilon>0\), set \(\theta=(1+\varepsilon)^2-1>0\) and define
\[
\Phi_\varepsilon=\Psi_\theta.
\]
Since \(\sqrt{1+\theta}=1+\varepsilon\), the norm estimate above becomes \(\|\Phi_\varepsilon(g)\|\le(1+\varepsilon)C_{p,X}(n,m)\|g\|\), as required.
\end{proof}

\section*{Disclosure of AI assistance}

During the preparation of this work, the author used GPT-5.6 Sol to improve the exposition of the manuscript and verify calculations. All outputs from this tool were reviewed and edited by the author as needed, who takes full responsibility for the content of the manuscript.


\begin{thebibliography}{99}
\raggedright

\bibitem{BG52}
R.~G.~Bartle and L.~M.~Graves, \emph{Mappings between function spaces}, Trans. Amer. Math. Soc. \textbf{72} (1952), no.~3, 400--413.

\bibitem{CWX26}
Q.~Cheng, Y.~Wang, and B.~Xiang, \emph{Sharp Metric Cotype Inequalities for \(L_1\) via Nonlinear Cut Smoothing}, \href{https://arxiv.org/abs/2609.08749v2}{arXiv:2609.08749v2} [math.FA], 2026.

\bibitem{EMN19}
A.~Eskenazis, M.~Mendel, and A.~Naor, \emph{Nonpositive curvature is not coarsely universal}, Invent. Math. \textbf{217} (2019), no.~3, 833--886.

\bibitem{GMN11}
O.~Giladi, M.~Mendel, and A.~Naor, \emph{Improved bounds in the metric cotype inequality for Banach spaces}, J. Funct. Anal. \textbf{260} (2011), no.~1, 164--194.

\bibitem{MN08}
M.~Mendel and A.~Naor, \emph{Metric cotype}, Ann. of Math. (2) \textbf{168} (2008), no.~1, 247--298.

\bibitem{NS16}
A.~Naor and G.~Schechtman, \emph{Metric \(X_p\) inequalities}, Forum Math. Pi \textbf{4} (2016), e3, 81 pp.

\bibitem{Pis94}
G.~Pisier, \emph{Complex interpolation and regular operators between Banach lattices}, Arch. Math. (Basel) \textbf{62} (1994), no.~3, 261--269.

\end{thebibliography}
\end{document}